\newcommand\fg{{\mathfrak g}}
\newcommand\CC{\mathbb C}
\newcommand\RR{\mathbb R}
\theoremstyle{plain}
\newtheorem{thm}{Theorem}[section]
\newtheorem{lem}[thm]{Lemma}
\newtheorem{prop}[thm]{Proposition}
\newtheorem{cor}[thm]{Corollary}
\theoremstyle{definition}
\newtheorem{defn}[thm]{Definition}
\theoremstyle{remark}
\newtheorem{rem}{Remark}
\newtheorem{ex}{Example}
\title{complex connections with trivial holonomy}
\author{A. Andrada}
\email{andrada@mate.uncor.edu}
\author{M.L. Barberis}
\email{barberis@mate.uncor.edu}
\author{I.G. Dotti}
\email{idotti@mate.uncor.edu}
\address{FaMAF-CIEM, Universidad Nacional de C\'{o}rdoba, Ciudad Universitaria, 5000 C\'{o}rdoba, Argentina}
\subjclass[2000]{Primary 53C15; Secondary 53B05}
\thanks{The authors were partially supported by
CONICET, ANPCyT and SECyT-UNC (Argentina).}
\begin{document}
\maketitle

\begin{abstract}
Given  an almost complex manifold $(M, J)$, we study complex
connections   with trivial holonomy and such that the
corresponding torsion  is either of type $(2,0)$ or of type
$(1,1)$ with respect to $J$. Such connections arise naturally when
considering  Lie groups, and quotients by discrete subgroups,
equipped with bi-invariant and abelian complex structures.
\end{abstract}

\section{Introduction}

Let $M$ be an $n$-dimensional  connected manifold together with an
affine connection   with trivial holonomy, hence flat. This
amounts to having  an absolute parallelism on $M$, which in turn
is equivalent to a smooth trivialization of the frame bundle $B$
(see  \cite[Proposition 2.2]{W}). Given a pseudo-Riemannian metric
$g$ on $M$, J. Wolf studied in \cite{W,W2} the problem of
existence of  metric connections with trivial holonomy and having
the same geodesics as the Levi-Civita connection. Equivalently, he
considered the class of pseudo-Riemannian manifolds $(M,g)$ which
carry connections $\nabla$ such that $\nabla g=0$, Hol($\nabla$)=
$1$ and whose torsion is totally skew-symmetric (see \cite{AF} for
a different approach  using geometries with torsion in the
Riemannian case). When the connection is required to be complete
with parallel torsion, the resulting manifolds are of the form
$\Gamma\backslash G$ with $G$ a simply connected Lie group
carrying a bi-invariant pseudo-Riemannian metric and $\Gamma$ a
discrete subgroup of $G$. Moreover, $\nabla$ is induced by the
affine connection corresponding to the parallelism of left
translation on $G$ and the pseudo-Riemannian metric $g$ is induced
from a bi-invariant metric on $G$ \cite[Theorem 3.8]{W}. He also
provided a complete classification of all complete
pseudo-Riemannian manifolds admitting such connections in the
reductive case \cite[Theorem 8.16]{W2}.

It is our aim to investigate an analogue of the previous problem
in the case of almost complex manifolds instead of
pseudo-Riemannian manifolds. More precisely, given  an almost
complex manifold $(M, J)$, we will be interested in studying
complex connections $\nabla$ on $M$ with trivial holonomy and such
that the corresponding torsion $T$ is either of type $(2,0)$ or of
type $(1,1)$ with respect to $J$.

Our first observation (see Proposition \ref{cor(1,1)}) is that
when $\nabla J =0$ and  the torsion $T$ of $\nabla$ is of type
$(2,0)$ or  $(1,1)$ then $J$ is necessarily integrable, that is,
$(M,J)$ is a complex manifold. We prove a general result for
affine complex manifolds with trivial holonomy (Theorem
\ref{discrete}),
 analogous to \cite[Theorem 1]{Wa} (see also
\cite[Theorem 3.6]{KT}).

We show that on a complex parallelizable manifold the existence of
$n$ holomorphic vector fields $Z_1, \dots , Z_n$ which are
linearly independent at every point of $M$ is equivalent to the
existence of a complex connection $\nabla$ on $M$ with trivial
holonomy whose torsion tensor field $T$ is of type $(2,0)$
(Proposition \ref{complex-parallelizable}).  In particular the
Chern connection of any metric compatible with $J$ and having
constant coefficients on the given trivialization has trivial
holonomy, hence it is flat.

On the other hand, the existence of  $n$ commuting vector fields
$Z_1, \dots , Z_n$ which are linearly independent sections of
$T^{1,0}M$  at every point of $M$ is equivalent to the existence
of a complex connection $\nabla$ on $M$ with trivial holonomy
whose torsion tensor field $T$ is of type $(1,1)$ (Proposition
\ref{def-ab}).
 Such a connection will be called an {\em abelian connection}.
The class of abelian connections is the $(1,1)$-counterpart of the
class of Chern-type connections on complex parallelizable
manifolds (Proposition \ref{complex-parallelizable} and Definition
\ref{def-chern-type}).

Our motivation for studying  abelian connections arises from the
fact that quotients by a discrete subgroup of  Lie groups carrying
abelian complex structures, which have been studied by several
authors (see, for instance \cite{ABD,BD2,cfp,cfu,df_0,mpps,V}),
are natural examples of such manifolds. One of our results
(Corollary \ref{2-step-solv}), which is analogous to \cite[Theorem
3.8]{W}, asserts that when $\nabla$ is complete with parallel
torsion then $M=\Gamma\backslash G$ with $G$ a simply connected
Lie group, $\Gamma$ a discrete subgroup of $G$, $\nabla$ is
induced by the connection corresponding to the parallelism of left
translation on $G$ and the almost complex structure $J$ comes from
a left invariant  abelian complex structure on $G$.

We point out that the class of complex connections we consider in
this work is not the same as the class of complex-flat connections
introduced by D. Joyce in \cite{Joy}. According to \cite{Joy}, an
affine connection $\nabla$ on a  complex manifold $(M,J)$ is
called complex-flat when $\nabla J=0$, $\nabla$ is torsion-free
and the curvature tensor of $\nabla$ satisfies certain condition
which is always fulfilled by the curvature tensor of a K\"ahler
metric. The tangent bundle $TM$ of $M$ is naturally a complex
manifold, with a complex structure $\mathcal J$ induced by $J$. It
was shown in \cite[Theorem 6.2]{Joy} that given a complex-flat
connection on $(M,J)$ it is possible to endow  $TM$ with a complex
structure $\mathcal K$ commuting with $\mathcal J$. In this case,
both $\mathcal J$ and $\mathcal K$ induce complex structures on
the cotangent bundle $T^*M$ and it turns out that the natural
symplectic structure on  $T^*M$ gives rise to a pseudo-Riemannian
metric on $T^*M$ which is pseudo-K\"ahler with respect to
$\mathcal K$.

\

\noindent
 {\sc Acknowledgement.} We dedicate this article to Joe
Wolf, whose work has inspired the research of many mathematicians
in the broad field of Differential Geometry and Lie Theory. We are
grateful for his important contribution to the development of our
Mathematics Department.

\

\section{Preliminaries}

Let $\nabla$ be an affine connection on a  manifold $M$ with
torsion tensor field $T$, where $T(X,Y)= \nabla_XY - \nabla_YX
-[X,Y]$, for all $X, Y$ vector fields on $M$.

Given an almost complex structure $J$ on $M$, we denote by $N$ the
Nijenhuis tensor of $J$, defined by
\begin{equation}\label{nijen}
N(X,Y):=[JX,JY]-J[X,JY]-J[JX,Y]- [X,Y].
\end{equation}
Recalling that
\begin{equation}\label{parallel}
\left( \nabla_{X}J\right)Y = \nabla _X \left( JY\right) - J
\left(\nabla _X Y \right),
\end{equation}
we obtain the following identity:
\begin{equation}\label{nijen1}
\begin{split}
N(X,Y) &\;= \left( \nabla_{JX}J\right)Y -\left(
\nabla_{JY}J\right)X + \left( \nabla_{X}J\right)JY -\left(
\nabla_{Y}J\right)JX
\\ & \qquad +T(X,Y)-T(JX,JY)+ J\left( T(JX,Y)+T(X,JY) \right),
\end{split}
\end{equation}
for all $X,Y$ vector fields on $M$.

The almost complex structure $J$ is called integrable when
$N\equiv 0$, and in this case $(M,J)$ is a complex manifold
\cite{NN}. The tensor field $J$ is called parallel with respect to
$\nabla$ when $\nabla J = 0$, that is, $ \left(\nabla_{X}J\right)Y
=0$ for all $X,Y$ vector fields on $M$ (see \eqref{parallel}).
Also in this case one says that $\nabla$ is a complex connection
(see \cite[p. 143]{KN}).

The next lemma follows from equation \eqref{nijen1}.
\begin{lem}\label{lem(1,1)}
Let $(M,J)$ be an almost complex manifold with a complex
connection $\nabla $. Then $J$ is integrable if and only if the
torsion $T$ of $\nabla$ satisfies:
\[  T(X,Y)-T(JX,JY)+ J\left(T(JX,Y)+T(X,JY)\right) =0,\]
for all vector fields $X,Y$ on $M$.
\end{lem}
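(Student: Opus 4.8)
The plan is to read off the statement directly from the Nijenhuis identity \eqref{nijen1}, which already expresses $N(X,Y)$ as a sum of two groups of terms. Since $\nabla$ is assumed to be a complex connection, we have $\nabla J = 0$, and hence $(\nabla_Z J)W = 0$ for all vector fields $Z, W$ on $M$. Therefore the first four terms on the right-hand side of \eqref{nijen1}, namely
\[
\left(\nabla_{JX}J\right)Y - \left(\nabla_{JY}J\right)X + \left(\nabla_{X}J\right)JY - \left(\nabla_{Y}J\right)JX,
\]
all vanish identically. What remains is precisely
\[
N(X,Y) = T(X,Y) - T(JX,JY) + J\left(T(JX,Y) + T(X,JY)\right),
\]
valid for all vector fields $X, Y$ on $M$.

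From this reduced identity the equivalence is immediate. If $J$ is integrable, then $N \equiv 0$ by definition, so the right-hand side vanishes for all $X, Y$, which is exactly the asserted torsion identity. Conversely, if the torsion satisfies the displayed condition for all $X, Y$, then the right-hand side of the reduced identity is zero, forcing $N(X,Y) = 0$ for all $X, Y$; thus $N \equiv 0$ and $J$ is integrable by \cite{NN}.

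There is essentially no obstacle here: the entire content of the lemma has been front-loaded into the derivation of \eqref{nijen1}, and once $\nabla J = 0$ is imposed the statement is a one-line consequence. The only point requiring care is confirming that the vanishing of each of the four derivative terms uses the correct instances of $\nabla J = 0$ — specifically that $(\nabla_{JX} J)Y$, $(\nabla_{JY} J)X$, $(\nabla_X J)(JY)$, and $(\nabla_Y J)(JX)$ are all covered, since the arguments fed into $J$ and into $\nabla$ are various combinations of $X$, $Y$, $JX$, $JY$. As $\nabla J = 0$ means the tensor $\nabla J$ vanishes on every pair of arguments, each of these four terms is zero, and the reduction goes through without further work.
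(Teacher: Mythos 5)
Your proof is correct and follows exactly the route the paper intends: the paper simply remarks that the lemma ``follows from equation \eqref{nijen1}'', and your argument spells out precisely why — $\nabla J=0$ annihilates the four covariant-derivative terms in \eqref{nijen1}, leaving $N(X,Y)$ equal to the displayed torsion expression, so $N\equiv 0$ is equivalent to that expression vanishing.
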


\smallskip

The torsion $T$ of a connection $\nabla$ on the almost complex
manifold $(M,J)$ is said to be:
\begin{itemize}
\item of type $(1,1)$ if $T(JX,JY)=T(X,Y)$,
\item of type $(2,0)$ if $T(JX,Y)=JT(X,Y)$,
\item of type $(2,0)+(0,2)$ if $T(JX,JY)=-T(X,Y)$,
\end{itemize}
for all vector fields $X,Y$ on $M$.

\smallskip

\begin{prop}\label{cor(1,1)}
Let $(M,J)$ be an almost complex manifold.
\begin{enumerate}
\item[(i)] If $\nabla$ is a complex connection on $M$ whose torsion is of
type $(1,1)$ with respect to $J$, then $J$ is integrable.
\item[(ii)] If $\nabla$ is a complex connection on $M$ whose torsion is of
type $(2,0)$ with respect to $J$, then $J$ is integrable.
\item[(iii)] If $\nabla$ is a complex connection on $M$ whose torsion is of
type $(2,0)+(0,2)$ and $J$ is integrable, then $T$ is of type
$(2,0)$.
\item[(iv)] If $J$ is integrable, then there exists a complex
connection $\nabla$ whose torsion is of type $(1,1)$ with respect
to $J$.
\item[(v)] If $J$ is integrable, then there exists a complex connection
$\nabla$ whose torsion is of type $(2,0)$ with respect to $J$.
\end{enumerate}
\end{prop}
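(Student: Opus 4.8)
The plan is to derive the three implications (i)--(iii) as algebraic consequences of Lemma \ref{lem(1,1)}, using only $J^2=-\id$ and the skew-symmetry of $T$, and to settle the existence statements (iv)--(v) by exhibiting suitable connections. Recall that Lemma \ref{lem(1,1)} says that, for a complex connection, $J$ is integrable if and only if $T(X,Y)-T(JX,JY)+J\bigl(T(JX,Y)+T(X,JY)\bigr)=0$ for all $X,Y$; this identity is the engine for the first three parts.

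For (i) I would first note that the type $(1,1)$ relation $T(JX,JY)=T(X,Y)$, after replacing $Y$ by $JY$ and using $J^2=-\id$, yields $T(JX,Y)+T(X,JY)=0$. Feeding both relations into Lemma \ref{lem(1,1)} makes every term drop out, so $N\equiv0$ and $J$ is integrable. For (ii) I would start from $T(JX,Y)=JT(X,Y)$, use skew-symmetry to get $T(X,JY)=JT(X,Y)$, and then $T(JX,JY)=-T(X,Y)$; substituting these three identities into Lemma \ref{lem(1,1)} again gives a complete cancellation (the two $J^2$-terms supplying the needed sign), whence integrability.

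For (iii) the hypotheses are $T(JX,JY)=-T(X,Y)$ together with integrability, so the expression in Lemma \ref{lem(1,1)} vanishes and, after substituting the $(2,0)+(0,2)$ relation, reduces to $T(JX,Y)+T(X,JY)=2\,JT(X,Y)$. The decisive step is then to replace $X$ by $JX$ in this identity and to use $T(JX,JY)=-T(X,Y)$ once more; this isolates $JT(JX,Y)=-T(X,Y)$, that is $T(JX,Y)=JT(X,Y)$, which is precisely the type $(2,0)$ condition.

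For the existence statements (iv) and (v) the quickest route is to produce a torsion-free complex connection: in a holomorphic chart the flat coordinate connection is torsion-free and satisfies $\nabla J=0$, and since both properties survive convex combinations, patching such local connections by a partition of unity subordinate to a holomorphic atlas gives a global torsion-free complex connection, whose vanishing torsion is at once of type $(1,1)$ and of type $(2,0)$. If one prefers connections with genuinely nonzero torsion, I would work in the splitting $T_\CC M=T^{1,0}\oplus T^{0,1}$: prescribing the mixed derivatives by $\nabla_{\bar W}Z=[\bar W,Z]^{1,0}$ (and its conjugate) while choosing the pure part freely forces $T$ to be of type $(2,0)$ (the Chern connection of a Hermitian metric being of this kind), whereas choosing the pure part so that $\nabla_ZW-\nabla_WZ=[Z,W]$ forces $T$ to be of type $(1,1)$. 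I expect the only real work to lie in (iv)--(v): one must check that these prescriptions define honest complex connections and, crucially, invoke the integrability of $J$ to guarantee that the bracket terms $[Z,W]$ and $[\bar W,Z]^{1,0}$ land in the asserted summands of $T_\CC M$; parts (i)--(iii) are mechanical once Lemma \ref{lem(1,1)} is in hand.
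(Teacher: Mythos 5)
Your parts (i)--(iii) follow exactly the paper's route: the paper simply declares them ``a straightforward consequence of Lemma \ref{lem(1,1)}'', and the algebra you supply is the correct filling-in of that claim (in (i) the substitution $Y\mapsto JY$ gives $T(JX,Y)+T(X,JY)=0$; in (ii) the three derived identities cancel in pairs; in (iii) the substitution $X\mapsto JX$ in $T(JX,Y)+T(X,JY)=2JT(X,Y)$ does isolate $T(JX,Y)=JT(X,Y)$). Where you genuinely diverge is in (iv)--(v). The paper fixes a Hermitian metric $g$ and takes the first and second canonical connections $\nabla^1,\nabla^2$ (the latter being the Chern connection), quoting Lichnerowicz and Gauduchon for the fact that their torsions are of type $(1,1)$ and $(2,0)$ respectively; this produces metric connections with generally nonzero torsion and sets up objects (notably the Chern connection) that the paper reuses in Section 3. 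Your primary route instead patches the flat coordinate connections of a holomorphic atlas by a partition of unity; since both torsion and $\nabla J$ depend affinely on the connection, the result is a global torsion-free complex connection, and zero torsion is vacuously of type $(1,1)$ and of type $(2,0)$, so it settles (iv) and (v) simultaneously. This is logically valid and more elementary (no metric, no appeal to the Hermitian-connection literature), though it resolves the existence claims degenerately; your secondary sketch --- prescribing $\nabla_{\bar W}Z=[\bar W,Z]^{1,0}$ to kill the $(1,1)$ part of the torsion, or prescribing the pure part so that $T(Z,W)=0$ for $Z,W\in T^{1,0}M$ --- is the non-degenerate version and is essentially the Chern-type construction the paper obtains from the metric, with integrability correctly identified as the point that makes the bracket terms land in the right summands.
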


\begin{proof}
(i), (ii) and (iii) are a straightforward consequence of Lemma
\ref{lem(1,1)}.

To prove (iv) and (v), we introduce a Hermitian metric $g$ on $M$,
that is, $g$ is a Riemannian metric on $M$ satisfying $g(JX,JY)=
g(X,Y)$ for all vector fields $X,Y$ on $M$. If $\nabla^g$ is the
Levi-Civita connection of $g$, then we consider the connections
$\nabla^1$ and $\nabla^2$ defined by
\begin{equation}\label{first} g\left( \nabla^1_X Y , Z \right) =  g\left(
\nabla^g _X Y , Z \right) + \frac 14 \left( d\omega (X,JY,Z) +
d\omega (X,Y,JZ) \right) , \end{equation} 
\[ g\left( \nabla^2_X Y , Z \right) =  g\left( \nabla^g _X Y ,Z
\right) - \frac12 \, d\omega (JX,Y,Z), \] where $\omega (X,Y):=
g(JX,Y)$ is the K\"ahler form corresponding to $g$ and $J$. These
connections satisfy
\[ \nabla^1 g =0, \quad \nabla^1 J= 0, \quad T^1  \; \text{ is of type }
(1,1), \] 
\begin{equation}\label{chern} \nabla^2 g =0, \quad
\nabla^2 J= 0, \quad T^2 \; \text{ is of type } (2,0),
\end{equation}
(see \cite{Li,Ga}), thus proving the claim.
\end{proof}

\begin{rem}
The connections $\nabla^1$ and $\nabla^2$ appearing in the proof
of Proposition \ref{cor(1,1)} are known, respectively, as the
first and second canonical connection associated to the Hermitian
manifold $(M,J,g)$. The connection $\nabla^2$ is also known as the
{\em Chern} connection, and it is the unique connection on
$(M,J,g)$ satisfying \eqref{chern}. In the almost Hermitian case,
the Chern connection is the unique complex metric connection whose
torsion is of type $(2,0)+(0,2)$, equivalently, the
$(1,1)$-component of the torsion vanishes.
\end{rem}

\begin{rem}
If $\overline{\nabla}$ is a torsion-free affine connection on $M$,
define
\[
\nabla_{X}Y := \overline{\nabla}_X Y  + \frac{1}{2} \left(
\overline{\nabla}_{X}J\right)JY = \frac{1}{2}(\overline{\nabla}_X
Y -J \overline{\nabla}_X JY ),
\]
for $X,Y$ vector fields on $M$. It is easy to see that $\nabla
J=0$ and using \eqref{nijen1}, we obtain that $T(X,Y)=T(JX,JY)$,
i.e. $T$ is of type $(1,1)$ with respect to $J$.

It is proved in \cite[p. 21]{A} that when $\overline{\nabla}$ is
the Levi-Civita connection of a Hermitian metric on $M$, then the
connection thus obtained is the first canonical connection
$\nabla^1$ defined in \eqref{first}.
\end{rem}

\

\section{Complex connections with trivial holonomy}\label{sec-3}

Let $M$ be an $n$-dimensional  connected manifold and $\nabla$ an
affine connection  on $M$ with trivial holonomy. Then the space
$\mathcal{P}^{\nabla}$ of parallel vector fields on $M$ is an
$n$-dimensional real vector space (see for instance
\cite[Proposition 2.2]{W}). If $T$ denotes the torsion tensor
field corresponding to $\nabla$, then
\begin{equation} \label{flat-torsion} T(X,Y)=-[X,Y], \qquad \text{ for all } X,Y \in \mathcal P ^{\nabla} . \end{equation}
We point out that, in general, the space  $\mathcal P^{\nabla}$ of
parallel vector fields is not closed under the Lie bracket. More
precisely, there is the following well-known result (see for
instance \cite[p. 323]{W}):

\begin{lem}\label{p-nabla}
The space $\mathcal P ^{\nabla}$ of parallel vector fields is a
Lie subalgebra of $\mathfrak{X}(M)$ if and only if the torsion $T$
of $\nabla$ is parallel.
\end{lem}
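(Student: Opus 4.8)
The plan is to establish both directions of the equivalence by relating the parallelism of the torsion to its behaviour on the frame of parallel vector fields $\mathcal P^\nabla$. Since $\nabla$ has trivial holonomy, $\mathcal P^\nabla$ is an $n$-dimensional real vector space whose elements form a global frame on $M$. I would first record the two facts I will lean on: by \eqref{flat-torsion}, for $X,Y\in\mathcal P^\nabla$ we have $T(X,Y)=-[X,Y]$, and since parallel vector fields satisfy $\nabla X=0$, the general formula $(\nabla_W T)(X,Y)=\nabla_W(T(X,Y))-T(\nabla_W X,Y)-T(X,\nabla_W Y)$ collapses, for $X,Y\in\mathcal P^\nabla$, to $(\nabla_W T)(X,Y)=\nabla_W(T(X,Y))$. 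These two observations are the crux and reduce everything to computing a single covariant derivative.

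For the direction asserting that parallel torsion implies $\mathcal P^\nabla$ is a subalgebra, I would assume $\nabla T=0$ and take $X,Y\in\mathcal P^\nabla$. Then $0=(\nabla_W T)(X,Y)=\nabla_W(T(X,Y))=-\nabla_W([X,Y])$ for every vector field $W$, so $[X,Y]$ is a parallel vector field, i.e.\ $[X,Y]\in\mathcal P^\nabla$. Since this holds for all pairs from a spanning set, $\mathcal P^\nabla$ is closed under the bracket and hence a Lie subalgebra of $\mathfrak X(M)$.

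For the converse, I would assume $\mathcal P^\nabla$ is a Lie subalgebra and show $\nabla T=0$. Because $\mathcal P^\nabla$ is a global parallel frame and $T$ is a tensor, it suffices to check $(\nabla_W T)(X,Y)=0$ for $X,Y$ ranging over this frame and $W$ arbitrary; multilinearity and the vanishing of $\nabla$ on the frame extend the conclusion to all arguments. For $X,Y\in\mathcal P^\nabla$ the reduction above gives $(\nabla_W T)(X,Y)=-\nabla_W([X,Y])$, and the subalgebra hypothesis says $[X,Y]\in\mathcal P^\nabla$, so $\nabla_W([X,Y])=0$. Hence $(\nabla_W T)(X,Y)=0$ on the frame, and therefore $\nabla T=0$ identically.

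The only genuine subtlety, which I would state carefully rather than gloss over, is the passage from ``$\nabla T$ vanishes when evaluated on parallel vector fields'' to ``$\nabla T=0$ as a tensor''. This is the step where triviality of the holonomy is essential: it guarantees $\mathcal P^\nabla$ is a frame, so checking a tensor identity on this frame checks it everywhere, and the fact that each frame field is $\nabla$-parallel is what lets the tensorial Leibniz expansion simplify in the first place. I expect no serious computational obstacle beyond bookkeeping; the argument is essentially the two identities \eqref{flat-torsion} and the parallel-frame simplification of $\nabla T$, read in opposite directions.
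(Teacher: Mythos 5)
Your proof is correct: both directions follow, as you say, from the identity $(\nabla_W T)(X,Y)=\nabla_W\bigl(T(X,Y)\bigr)=-\nabla_W([X,Y])$ for $X,Y\in\mathcal P^{\nabla}$, together with the observation that a tensor vanishing on the parallel frame vanishes identically. The paper itself gives no proof, citing Wolf for this well-known fact, and your argument is precisely the standard one that the citation stands in for.
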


In the next result we give equivalent conditions for an affine
connection with trivial holonomy on an almost complex manifold to
be complex.

\begin{lem}\label{equiv}
Let $M$, $\dim M=2n$, be a connected manifold with an almost
complex structure $J$. Assume that there exists an affine
connection $\nabla $ on $M$ with trivial holonomy. Then the
following conditions are equivalent:
\begin{enumerate}
\item [(i)]  $\nabla J=0$;
\item [(ii)] the space $\mathcal{P}^{\nabla}$ of parallel vector fields is $J$-stable;
\item [(iii)] there exist parallel vector fields  $X_1, \dots , X_n , JX_1, \dots , JX_n$,
linearly independent at every point of $M$.
\end{enumerate}
\end{lem}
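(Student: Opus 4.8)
The plan is to establish the cycle (i) $\Rightarrow$ (ii) $\Rightarrow$ (iii) $\Rightarrow$ (i), relying throughout on the basic consequence of trivial holonomy recorded in \cite[Proposition 2.2]{W}: for each $p \in M$ the evaluation map $\mathcal{P}^{\nabla} \to T_pM$, $X \mapsto X(p)$, is a linear isomorphism. In particular $\mathcal{P}^{\nabla}$ is a real vector space of dimension $2n$, a parallel field is determined by its value at any single point, and a finite family of parallel fields is linearly independent at one point if and only if it is linearly independent at every point, equivalently linearly independent in $\mathcal{P}^{\nabla}$.

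Both the implication (i) $\Rightarrow$ (ii) and the implication (iii) $\Rightarrow$ (i) reduce to the product rule \eqref{parallel}. For the first, given $X \in \mathcal{P}^{\nabla}$ and assuming $\nabla J = 0$, I would compute $\nabla_Y(JX) = (\nabla_Y J)X + J(\nabla_Y X) = 0$, so $JX \in \mathcal{P}^{\nabla}$ and $\mathcal{P}^{\nabla}$ is $J$-stable. For the second, the $2n$ fields $X_1, \dots, X_n, JX_1, \dots, JX_n$ constitute a global frame by hypothesis, so it suffices to check that $\nabla_Y J$ annihilates each of them; using \eqref{parallel}, the parallelism of every frame field, and $J^2 = -\id$, I obtain $(\nabla_Y J)(X_i) = \nabla_Y(JX_i) - J(\nabla_Y X_i) = 0$ and $(\nabla_Y J)(JX_i) = \nabla_Y(-X_i) - J(\nabla_Y(JX_i)) = 0$, whence $\nabla J = 0$.

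The only step requiring more than the product rule is (ii) $\Rightarrow$ (iii). Assuming $\mathcal{P}^{\nabla}$ is $J$-stable, the operator $J$ restricts to an endomorphism of $\mathcal{P}^{\nabla}$ with square $-\id$, hence a complex structure on this $2n$-dimensional real vector space; by the standard linear algebra of complex structures there exist $X_1, \dots, X_n \in \mathcal{P}^{\nabla}$ for which $X_1, \dots, X_n, JX_1, \dots, JX_n$ is a real basis. These are parallel and linearly independent in $\mathcal{P}^{\nabla}$. The main point, and the only place where trivial holonomy is genuinely used, is to upgrade this linear independence of global fields to pointwise linear independence at every $p \in M$: this is immediate from the evaluation isomorphism of the first paragraph, which sends a basis of $\mathcal{P}^{\nabla}$ to a basis of $T_pM$.
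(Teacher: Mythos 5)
Your proof is correct and complete: the cycle (i) $\Rightarrow$ (ii) $\Rightarrow$ (iii) $\Rightarrow$ (i) via the product rule \eqref{parallel}, the tensoriality of $\nabla_Y J$, and the evaluation isomorphism $\mathcal{P}^{\nabla}\to T_pM$ coming from trivial holonomy is exactly the intended argument. The paper in fact states Lemma \ref{equiv} without any proof, treating it as routine, so there is nothing to compare against beyond noting that your write-up supplies the standard argument the authors omitted.
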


\

\subsection{Complex parallelizable manifolds}\label{def(2,0)}
We recall from \cite{Wa} that a complex manifold $(M,J)$ is called
{\em complex parallelizable} when there exist $n$ holomorphic
vector fields $Z_1, \dots , Z_n$, linearly independent at every
point of $M$ \cite{KN,N,Wa}. The following classical result, due
to Wang, characterizes the compact complex parallelizable
manifolds.

\begin{thm}[\cite{Wa}] \label{wang}
Every compact complex parallelizable manifold may be written as a
quotient space $\Gamma\backslash G$ of a complex Lie group by a
discrete subgroup $\Gamma$.
\end{thm}

\medskip

We prove next a result which relates the notion of complex
parallelizability with the existence of a flat complex connection
with torsion of type $(2,0)$.

\begin{prop}\label{complex-parallelizable}
Let $M$ be a connected $2n$-dimensional manifold with a complex
structure $J$. Then the following conditions are equivalent:
\begin{enumerate}
\item [(i)] there exist vector fields $X_1, \dots , X_n , JX_1, \dots , JX_n$, linearly independent at
every point of $M$, such that
\begin{equation}\label{(2,0)}
[X_k,X_l]=-[JX_k,JX_l],\;\;   k<l, \quad [JX_k,X_l]=J[X_k,X_l],
\;\;   k\leq l,
\end{equation}
\item [(ii)] there exist $n$ holomorphic vector fields $Z_1, \dots , Z_n$ which are linearly
independent at every point of $M$ (in other words, $(M,J)$ is
complex parallelizable);
\item [(iii)] there exist $n$ linearly independent holomorphic $(1,0)$-forms $\alpha _1 , \dots , \alpha _n$
on $M$ such that $d\alpha _i $ is a section of  $\Lambda ^{2,0}M$
for every $i$;
\item [(iv)] there exists a complex connection $\nabla$ on $M$ with trivial holonomy whose torsion tensor field $T$ is of type
$(2,0)$.
\end{enumerate}
\end{prop}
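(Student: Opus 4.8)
The plan is to complexify and work with the frame $Z_k := X_k - iJX_k$ of type $(1,0)$, reducing all four conditions to the single clean statement
\[ [Z_k,\overline{Z}_l]=0 \qquad \text{for all } k,l. \]
Since $(M,J)$ is a complex manifold, integrability ($N\equiv 0$) is equivalent to $T^{1,0}M$ being involutive, so $[Z_k,Z_l]\in T^{1,0}M$ and $[\overline{Z}_k,\overline{Z}_l]\in T^{0,1}M$ hold automatically; only the mixed brackets $[Z_k,\overline{Z}_l]$ carry information. The real frame $X_1,\dots,X_n,JX_1,\dots,JX_n$ is $\RR$-linearly independent at a point exactly when $Z_1,\dots,Z_n$ are $\CC$-linearly independent there, so the independence hypotheses match throughout.

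The technical heart, and the step I expect to be the main obstacle, is the equivalence of (i) with $[Z_k,\overline{Z}_l]=0$. Expanding gives
\[ [Z_k,\overline{Z}_l]=\big([X_k,X_l]+[JX_k,JX_l]\big)+i\big([X_k,JX_l]-[JX_k,X_l]\big), \]
so the vanishing is equivalent to the two real identities $[X_k,X_l]=-[JX_k,JX_l]$ and $[X_k,JX_l]=[JX_k,X_l]$ for \emph{all} $k,l$. The subtlety is that (i) imposes the first identity only for $k<l$ and the relation $[JX_k,X_l]=J[X_k,X_l]$ only for $k\le l$; I would recover the remaining cases from the Nijenhuis identity, written as $[JX,JY]-[X,Y]=J\big([X,JY]+[JX,Y]\big)$. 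Feeding the first identity into $N\equiv0$ yields $[X_k,JX_l]+[JX_k,X_l]=2J[X_k,X_l]$ for all $k,l$, and combining this with the antisymmetry of the bracket promotes the asymmetric conditions in (i) to the symmetric pair above (and conversely shows the full vanishing implies (i)). This bookkeeping with index ranges, closed using integrability, is the delicate part.

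With the translation in hand, (i)$\Leftrightarrow$(ii) follows from the bracket characterization of holomorphicity: a $(1,0)$-field $Z$ is holomorphic iff $[\overline{W},Z]\in T^{0,1}M$ for every $(0,1)$-field $\overline{W}$, equivalently iff the $(1,0)$-part of each $[\overline{Z}_l,Z]$ vanishes. If $[Z_k,\overline{Z}_l]=0$ then each $Z_k$ is holomorphic, giving (ii); conversely, if the $Z_k$ are holomorphic then in local holomorphic coordinates $Z_k$ has holomorphic and $\overline{Z}_l$ antiholomorphic components, forcing $[\overline{Z}_l,Z_k]=0$ and hence (i). For (ii)$\Leftrightarrow$(iii) I would pass to the dual coframe $\alpha_1,\dots,\alpha_n$ of $(1,0)$-forms with $\alpha_i(Z_j)=\dt_{ij}$ and $\alpha_i|_{T^{0,1}M}=0$, and evaluate $d\alpha_i$ on frame pairs via $d\alpha(U,V)=U\alpha(V)-V\alpha(U)-\alpha([U,V])$: the hypothesis $[Z_k,\overline{Z}_l]=0$ kills the $(1,1)$-part and integrability kills the $(0,2)$-part, so $d\alpha_i$ is a section of $\Ld^{2,0}M$; running the same computation backwards shows that forms with $d\alpha_i\in\Ld^{2,0}M$ admit a dual frame $Z_i$ with $[Z_i,\overline{Z}_l]\in T^{0,1}M$, i.e.\ the $Z_i$ are holomorphic.

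Finally, (i)$\Leftrightarrow$(iv) is where Lemma \ref{equiv} and \eqref{flat-torsion} enter. Given (iv), Lemma \ref{equiv} produces a parallel frame $X_1,\dots,X_n,JX_1,\dots,JX_n$, and since $T(U,V)=-[U,V]$ on parallel fields by \eqref{flat-torsion}, the type-$(2,0)$ conditions $T(JU,V)=JT(U,V)$ and $T(JU,JV)=-T(U,V)$ translate directly into the bracket relations of (i). Conversely, given (i) I would define $\nabla$ to be the connection for which this frame is parallel; by construction $\nabla$ has trivial holonomy, its space of parallel fields is the $J$-stable $2n$-dimensional span of the frame, so $\nabla J=0$ by Lemma \ref{equiv}, and complexifying gives $T(Z_k,\overline{Z}_l)=-[Z_k,\overline{Z}_l]=0$ together with $T(Z_k,Z_l)=-[Z_k,Z_l]\in T^{1,0}M$, which is exactly the statement that $T$ is of type $(2,0)$.
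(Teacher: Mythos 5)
Your proof is correct and follows essentially the same route as the paper: the equivalence (i)$\Leftrightarrow$(iv) is handled identically via Lemma \ref{equiv} and \eqref{flat-torsion}, and your pivot condition $[Z_k,\overline{Z}_l]=0$ is just the complexified form of the paper's intermediate condition (i)' (that each $X_j$, $JX_j$ is an infinitesimal automorphism of $J$). The only difference is one of completeness: you prove directly, with the Nijenhuis identity and the local-coordinate characterization of holomorphic vector fields, the steps the paper delegates to \cite[Proposition IX.2.11]{KN} and to a ``well known'' remark, and your index bookkeeping for promoting the $k<l$, $k\le l$ relations of \eqref{(2,0)} to all $k,l$ is accurate.
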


\begin{proof}
We recall that a vector field $X$ on $M$ is an infinitesimal
automorphism of $J$ if $[X,JY]=J[X,Y]$ for every vector field $Y$
on $M$.

We note first that (i) is equivalent to (i)', where
\begin{enumerate}
\item[(i)'] there exist vector fields $X_1, \dots , X_n , JX_1, \dots , JX_n$, linearly independent at
every point of $M$, such that $X_j,JX_j$ is an infinitesimal
automorphism of $J$ for each $j$.
\end{enumerate}
The proof of this equivalence is straightforward. Moreover, it
follows from \cite[Proposition IX.2.11]{KN} that (i)' is
equivalent to (ii).

Let $\alpha_1 , \dots , \alpha _n$ be the holomorphic $1$-forms
dual to the holomorphic vector fields $Z_1, \dots , Z_n$. It is
well known that these holomorphic $1$-forms satisfy (iii), and the
converse also holds.

If (iv) holds, then there exist parallel vector fields  $X_1,
\dots , X_n , JX_1, \dots , JX_n$, linearly independent at every
point of $M$ (see Lemma \ref{equiv}). Using \eqref{flat-torsion}
and the fact that $T$ is of type $(2,0)$, relations \eqref{(2,0)}
hold and therefore (i) follows. Conversely, given vector fields
$X_1, \dots , X_n , JX_1, \dots , JX_n$ as in (i), let $\nabla$ be
the affine connection such that the space $\mathcal P^\nabla$ of
parallel vector fields is spanned by these vector fields. It
follows that $\nabla$ has trivial holonomy and Lemma \ref{equiv}
implies that $\nabla J=0$. Moreover, using equations
\eqref{flat-torsion} and \eqref{(2,0)}, we have that $T$ is of
type $(2,0)$ with respect to $J$, and this proves (iv).
\end{proof}

\medskip

\begin{cor}\label{chern flat} Let $(M,J)$ be a complex manifold. The following
conditions are equivalent:
\begin{enumerate}
\item[(i)] $(M,J)$ is complex parallelizable;
\item[(ii)] there exists a Hermitian metric $g$ on $M$ such that the
Chern connection associated to $(M,J,g)$ has trivial holonomy.
\end{enumerate}
\end{cor}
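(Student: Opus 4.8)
The plan is to deduce both implications from Proposition~\ref{complex-parallelizable}, together with the uniqueness property of the Chern connection recalled in the Remark following Proposition~\ref{cor(1,1)} (namely, that the Chern connection is the unique complex metric connection whose torsion is of type $(2,0)$).

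The implication (ii)$\Rightarrow$(i) I expect to be immediate. The Chern connection of any Hermitian metric $g$ on $(M,J)$ is by construction a complex connection ($\nabla^2 J=0$) whose torsion is of type $(2,0)$. Hence, if it has trivial holonomy, it is precisely a connection satisfying condition (iv) of Proposition~\ref{complex-parallelizable}, and that proposition yields at once that $(M,J)$ is complex parallelizable, i.e.\ (i).

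The substance lies in (i)$\Rightarrow$(ii). Assuming $(M,J)$ complex parallelizable, Proposition~\ref{complex-parallelizable} furnishes a complex connection $\nabla$ with trivial holonomy and torsion of type $(2,0)$, whose space $\mathcal{P}^{\nabla}$ of parallel vector fields is spanned by a frame $X_1,\dots,X_n,JX_1,\dots,JX_n$ that is linearly independent at every point of $M$. The idea is then to define a Riemannian metric $g$ by declaring this frame to be orthonormal. First I would verify that $g$ is Hermitian, that is $g(JX,JY)=g(X,Y)$: this is a direct check on the frame, using $J(JX_i)=-X_i$, so that $J$ permutes the orthonormal frame up to sign and preserves $g$. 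Since $\nabla$ annihilates every element of $\mathcal{P}^{\nabla}$ and $g$ has constant (in fact, $\pm\delta_{ij}$) coefficients in that parallel frame, one gets $\nabla g=0$; combined with $\nabla J=0$, this exhibits $\nabla$ as a complex metric connection for $(M,J,g)$ whose torsion is of type $(2,0)$.

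Finally I would invoke the uniqueness statement from the Remark: the Chern connection is the unique complex metric connection on $(M,J,g)$ with torsion of type $(2,0)$. Consequently $\nabla$ coincides with the Chern connection of $(M,J,g)$, and since $\nabla$ has trivial holonomy, so does the Chern connection, which is exactly (ii). The only place calling for some care is the verification that the metric built from the parallel frame is genuinely Hermitian and $\nabla$-parallel; once that is established, the identification with the Chern connection and the conclusion are a direct appeal to the results already at hand.
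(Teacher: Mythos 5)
Your proposal is correct and follows essentially the same route as the paper: both directions reduce to Proposition~\ref{complex-parallelizable}, and for (i)$\Rightarrow$(ii) you build the Hermitian metric by declaring the parallel frame $X_1,\dots,X_n,JX_1,\dots,JX_n$ orthonormal, observe $\nabla g=0$, and invoke the uniqueness of the Chern connection. (Only a cosmetic slip: the coefficients of $g$ in that frame are $\delta_{ij}$, not $\pm\delta_{ij}$, since $g$ is Riemannian.)
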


\begin{proof}
(ii) implies (i) follows from Proposition
\ref{complex-parallelizable}.

Assume now that (i) holds. Consider the linearly independent
vector fields $\{X_k, \, JX_k\}$ and the complex connection
$\nabla$ with trivial holonomy given in (i) and (iv) of
Proposition \ref{complex-parallelizable}, respectively. Let $g$ be
the Hermitian metric on $M$ such that the basis above is
orthonormal. Then it follows that $\nabla g=0$, hence by
uniqueness, $\nabla$ is the Chern connection associated to
$(M,J,g)$.
\end{proof}

\smallskip

\begin{rem}
In the compact case, a result similar to Corollary \ref{chern
flat} was obtained in \cite{DLV}. We notice that in \cite{DV,DLV},
a Hermitian metric $g$ on $(M,J)$ whose associated Chern
connection has trivial holonomy is called Chern-flat.
\end{rem}

\medskip

\begin{defn}\label{def-chern-type}
An affine connection $\nabla$ on a connected  complex manifold
$(M,J)$ will be called a {\em Chern-type} connection if it
satisfies condition (iv) of
Proposition~\ref{complex-parallelizable}.
\end{defn}

\medskip

\begin{cor}\label{cor-chern}
Let $(M,J)$ be a connected complex manifold and $\nabla$ an affine
connection with trivial holonomy. Then $\nabla$ is a Chern-type
connection on $(M,J)$ if and only if the space $\mathcal
P^{\nabla}$ of parallel vector fields is $J$-stable and $J$
satisfies
\begin{equation}\label{eq-chern}
J[X , Y]=[X,JY] \quad \text{ for any  } \quad X, Y \in \mathcal
P^{\nabla}.
\end{equation}
\end{cor}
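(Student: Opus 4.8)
The plan is to unwind the definition of a Chern-type connection and match its two defining properties against the two conditions in the statement. By Definition~\ref{def-chern-type}, $\nabla$ is a Chern-type connection precisely when it satisfies condition (iv) of Proposition~\ref{complex-parallelizable}, that is, $\nabla J=0$ and the torsion $T$ is of type $(2,0)$. The first of these is handled immediately by the equivalence (i)~$\Leftrightarrow$~(ii) of Lemma~\ref{equiv}: since $\nabla$ has trivial holonomy, $\nabla J=0$ holds if and only if $\mathcal P^{\nabla}$ is $J$-stable. Thus the whole content of the corollary reduces to proving that, \emph{under the assumption} $\nabla J=0$, the torsion $T$ is of type $(2,0)$ if and only if \eqref{eq-chern} holds.

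To carry this out I would first reduce the $(2,0)$ condition from all vector fields to the parallel frame. By Lemma~\ref{equiv}(iii) there exist parallel vector fields $X_1,\dots,X_n,JX_1,\dots,JX_n$ linearly independent at every point, so $\mathcal P^{\nabla}$ spans each tangent space, and by $J$-stability $J$ maps $\mathcal P^{\nabla}$ into itself. The defining identity $T(JX,Y)=JT(X,Y)$ is $C^\infty(M)$-linear in each argument, and $J$ is tensorial; hence it is enough to verify it for $X,Y\in\mathcal P^{\nabla}$. For such fields \eqref{flat-torsion} gives $T(X,Y)=-[X,Y]$, and since $JX\in\mathcal P^{\nabla}$ also $T(JX,Y)=-[JX,Y]$. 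Substituting, the type $(2,0)$ condition restricted to $\mathcal P^{\nabla}$ becomes $[JX,Y]=J[X,Y]$ for all $X,Y\in\mathcal P^{\nabla}$.

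It then remains to match this with the precise form of \eqref{eq-chern}. Interchanging $X$ and $Y$ and using antisymmetry of the Lie bracket turns $[JX,Y]=J[X,Y]$ into $[X,JY]=J[X,Y]$, which is exactly \eqref{eq-chern}; conversely the same manipulation runs backwards. This closes the equivalence: $\nabla$ is Chern-type iff $\mathcal P^{\nabla}$ is $J$-stable and \eqref{eq-chern} holds.

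There is no deep obstacle here, and the argument is essentially a bookkeeping exercise on the parallel frame; the only two points requiring genuine care are (a) justifying that the pointwise, tensorial $(2,0)$ identity need only be checked on parallel vector fields — this rests on tensoriality together with the $J$-stability of $\mathcal P^{\nabla}$, so that $JX$ is again parallel and \eqref{flat-torsion} applies to it — and (b) performing the antisymmetrization correctly so that the bracket identity produced by the $(2,0)$ condition lands on the exact form written in \eqref{eq-chern} rather than its mirror image.
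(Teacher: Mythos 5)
Your proof is correct and follows essentially the same route as the paper: the paper's own proof is a one-line reduction to the equivalence (i)$\Leftrightarrow$(iv) of Proposition~\ref{complex-parallelizable} together with Lemma~\ref{equiv}, and your argument simply inlines that reduction — parallel frame from trivial holonomy, \eqref{flat-torsion}, tensoriality of the type-$(2,0)$ condition, and the antisymmetrization turning $[JX,Y]=J[X,Y]$ into \eqref{eq-chern}. The extra care you take in spelling out why checking the $(2,0)$ identity on $\mathcal P^{\nabla}$ suffices, and in applying it to the given $\nabla$ rather than to some connection whose existence Proposition~\ref{complex-parallelizable}(iv) asserts, is a welcome (if minor) sharpening of the paper's terse justification.
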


\begin{proof}
We just have to observe that \eqref{eq-chern} is equivalent to
condition (i) of Proposition \ref{complex-parallelizable}.
\end{proof}
\medskip

\begin{rem}
When $J$ is an almost complex structure on $M$, we have the
following equivalences:
\begin{enumerate}
\item[(i)] there exist vector fields $X_1, \dots , X_n , JX_1, \dots , JX_n$, linearly independent at
every point of $M$, such that
\[
[X_k,X_l]=-[JX_k,JX_l],\;\;   k<l, \quad [JX_k,X_l]=[X_k,JX_l],
\;\;   k\leq l,
\]
\item [(ii)] there exists a complex connection $\nabla$ on $M$ with trivial holonomy whose torsion tensor field $T$ is of type
$(2,0)+(0,2)$.
\end{enumerate}
Moreover, analogously to Corollary \ref{cor-chern}, we have that
the torsion $T$ of a complex connection $\nabla$ with trivial
holonomy is of type $(2,0)+(0,2)$ if and only if $J$ satisfies
\begin{equation} [JX,JY]=-[X,Y] \quad \text{ for any } \quad X, Y
\in \mathcal P^{\nabla}.
\end{equation}
\end{rem}

\

\subsection{Flat complex connections with $(1,1$)-torsion}
Given an almost complex structure $J$ on $M$, we study complex
connections $\nabla$ on $M$ with trivial holonomy such that the
corresponding torsion $T$ is of type $(1,1)$ with respect to $J$.
It follows from Corollary~\ref{cor(1,1)} that when the almost
complex structure $J$ admits such a  connection, then $J$
satisfies the integrability condition $N\equiv 0$.


The following proposition is the analogue of Proposition
\ref{complex-parallelizable} in the case when the torsion of the
flat complex connection is of type $(1,1)$.

\begin{prop}\label{def-ab}
Let $M$ be a connected $2n$-dimensional manifold with
 an  almost complex structure $J$. Then the following conditions are equivalent:
\begin{enumerate}
\item [(i)] there exist  vector fields  $X_1, \dots , X_n , JX_1, \dots , JX_n$,   linearly independent at
every point of $M$, such that
\begin{equation} \label{tipo 1-1}
[X_k,X_l]=[JX_k,JX_l], \qquad \;\;[JX_k,X_l]=-[X_k,JX_l], \qquad
k<l;
\end{equation}
\item [(ii)] there exist $n$ commuting vector fields $Z_1, \dots , Z_n$ which are
linearly independent sections of $T^{1,0}M$  at every point of
$M$;
\item [(iii)] there exist $n$ linearly independent $(1,0)$-forms $\alpha _1 , \dots , \alpha _n$
on $M$ such that $d\alpha _i $ is a section of  $\Lambda ^{1,1}M$
for every $i$;
\item [(iv)] there exists a complex connection $\nabla$ on $M$ with trivial holonomy whose torsion tensor field $T$ is of type $(1,1)$.
\end{enumerate}
Moreover, any of the above conditions implies that $J$ is
integrable.
\end{prop}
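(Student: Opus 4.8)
The plan is to follow the pattern of the proof of Proposition~\ref{complex-parallelizable}, establishing (i)~$\Leftrightarrow$~(ii)~$\Leftrightarrow$~(iii) together with (i)~$\Leftrightarrow$~(iv), and extracting integrability along the way. The device linking the real and complex pictures is the frame $Z_k := X_k - iJX_k$: a short computation gives $JZ_k = iZ_k$, so each $Z_k$ is a section of $T^{1,0}M$, and the $Z_k$ are linearly independent at a point exactly when $X_1,\dots,X_n,JX_1,\dots,JX_n$ are.

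First I would prove (i)~$\Leftrightarrow$~(ii) by expanding
\[
[Z_k,Z_l]=\bigl([X_k,X_l]-[JX_k,JX_l]\bigr)-i\bigl([X_k,JX_l]+[JX_k,X_l]\bigr).
\]
Then $[Z_k,Z_l]=0$ is equivalent to the vanishing of both real and imaginary parts, which are precisely the two relations in \eqref{tipo 1-1}; antisymmetry of the bracket lets one impose these only for $k<l$. For (ii)~$\Rightarrow$~(iii) I take $\alpha_1,\dots,\alpha_n$ to be the $(1,0)$-forms dual to $Z_1,\dots,Z_n$. Using $d\alpha(U,V)=U\alpha(V)-V\alpha(U)-\alpha([U,V])$, the constancy of $\alpha_i(Z_j)=\delta_{ij}$ and the fact that $\alpha_i$ annihilates $T^{0,1}M$, the $(2,0)$-component $d\alpha_i(Z_k,Z_l)=-\alpha_i([Z_k,Z_l])$ vanishes because the $Z_k$ commute, and the $(0,2)$-component $d\alpha_i(\bar Z_k,\bar Z_l)=-\alpha_i([\bar Z_k,\bar Z_l])$ vanishes because $[\bar Z_k,\bar Z_l]=\overline{[Z_k,Z_l]}=0$; hence $d\alpha_i$ is of type $(1,1)$.

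The implication (iii)~$\Rightarrow$~(ii) is where the main work lies, since commutativity of the dual frame cannot be read from the $(2,0)$-part of $d\alpha_i$ alone. Here I would first exploit the $(0,2)$-part: letting $Z_1,\dots,Z_n$ be the $T^{1,0}M$-frame dual to the coframe $\alpha_1,\dots,\alpha_n$, the identity $d\alpha_i(\bar Z_k,\bar Z_l)=-\alpha_i([\bar Z_k,\bar Z_l])=0$ for all $i$ forces $[\bar Z_k,\bar Z_l]$ to have no $(1,0)$-component, so $T^{0,1}M$ is involutive and $J$ is integrable. Integrability then makes $T^{1,0}M$ involutive, so $[Z_k,Z_l]$ lies in $T^{1,0}M$ and equals $\sum_m \alpha_m([Z_k,Z_l])\,Z_m$; but $\alpha_m([Z_k,Z_l])=-d\alpha_m(Z_k,Z_l)=0$ since $d\alpha_m$ has no $(2,0)$-part, whence $[Z_k,Z_l]=0$. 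The same argument already yields the final \emph{moreover} clause from (iii).

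Finally, for (i)~$\Leftrightarrow$~(iv) I would argue as in Proposition~\ref{complex-parallelizable}, now with $T$ of type $(1,1)$. If (iv) holds, Lemma~\ref{equiv} supplies a parallel frame $X_1,\dots,JX_n$ spanning $\mathcal P^\nabla$ on which \eqref{flat-torsion} gives $T(U,V)=-[U,V]$; substituting the pairs $(X_k,X_l)$ and $(X_k,JX_l)$ into $T(JU,JV)=T(U,V)$ yields the two relations \eqref{tipo 1-1}. Conversely, given (i), let $\nabla$ be the flat connection whose parallel fields are spanned by $X_1,\dots,JX_n$; this span is $J$-stable, so $\nabla J=0$ by Lemma~\ref{equiv}, and verifying $T(JU,JV)=T(U,V)$ on all pairs from $\{X_k,JX_k\}$ via \eqref{flat-torsion} and \eqref{tipo 1-1} shows $T$ is of type $(1,1)$. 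The integrability in the \emph{moreover} clause also follows from (iv) through Proposition~\ref{cor(1,1)}(i). I expect the separation of the roles of the $(2,0)$- and $(0,2)$-components in (iii)~$\Rightarrow$~(ii) to be the only genuinely delicate point.
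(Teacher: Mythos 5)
Your proof is correct and follows essentially the same route as the paper: the same frame $Z_k=X_k-iJX_k$ for (i)$\Leftrightarrow$(ii), the same dual-coframe computation for (ii)$\Leftrightarrow$(iii), and the same use of Lemma~\ref{equiv} and \eqref{flat-torsion} for (i)$\Leftrightarrow$(iv). The only difference is that where you derive integrability from (iii) explicitly (involutivity of $T^{0,1}M$ from the vanishing $(0,2)$-part, which is indeed the delicate point needed to conclude $[Z_k,Z_l]=0$ from $\alpha_m([Z_k,Z_l])=0$), the paper simply cites Theorem~IX.2.8 of Kobayashi--Nomizu for the integrability of $J$ under (ii) or (iii).
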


\begin{proof} Let $X$ and $ Y$ be vector fields on $M$. A simple calculation
shows that
\[ [X-iJX, Y-iJY]=0 \quad {\text{ if and only if }} \quad [X,Y]=[JX,JY] \text{ and }
[JX,Y]=-[X,JY]. \] Thus, if
 (i) holds, $Z_l= X_l-iJX_l$, $l=1, \dots , n$, is a commuting family of
$(1,0)$ vector fields, linearly independent at every point of $M$.
Conversely, given $Z_1 , \dots , Z_n$ as in (ii), setting $X_l=Z_l
+\bar{Z_l}$, it turns out that $X_1, \dots , X_n , JX_1, \dots ,
JX_n$ satisfy (i).

We note first that an almost complex structure satisfying (ii) or
(iii) is integrable (see \cite[Theorem IX.2.8]{KN}). Therefore,
given a $(1,0)$ form $\alpha$, it follows that
\[ d \alpha  \text{ is a section of } \Lambda ^{1,1}M \quad
\text{ if and only if } \quad d \alpha (Z,W)=0 \;\; \forall \, Z,W
\in T^{1,0}M. \] In case $T^{1,0}M$ has a basis $Z_1 , \dots ,
Z_n$  of commuting vector fields, let $\alpha _1, \dots , \alpha
_n$ be the dual basis of $(1,0)$ forms. We calculate
\begin{equation}\label{(1,1)} d \alpha _i(Z_j,Z_l)=Z_j \left(\alpha _i (Z_l)\right) -
Z_l \left(\alpha _i (Z_j)\right)- \alpha_i \left([Z_j
,Z_l]\right),
\end{equation}
where $\alpha _i (Z_k)$ is constant on $M$ and the last summand is
zero since $Z_k$ are commuting vector fields, yielding $d\alpha
_i(Z_j,Z_l)=0$. This clearly implies that $d\alpha _i(Z,W)=0$ for
any $Z,W \in T^{0,1}M$, thus $d\alpha _i $ is a section of
$\Lambda ^{1,1}M$ for every $i$. Conversely, if (iii) holds, let
$Z_1 , \dots , Z_n$ be the basis of $T^{1,0}M$ dual to $\alpha _1,
\dots , \alpha _n$. By assumption, $d\alpha _i(Z_j,Z_l)=0$ for
every $1\leq i, j, l \leq n$, hence \eqref{(1,1)} implies that
$\alpha_i \left([Z_j ,Z_l]\right)=0$ for any $i$, therefore, $[Z_j
,Z_l]=0$ and (ii) follows.

If (iv) holds, then there exist parallel vector fields  $X_1,
\dots , X_n , JX_1, \dots , JX_n$, linearly independent at every
point of $M$ (see Lemma \ref{equiv}). Using \eqref{flat-torsion}
and the fact that $T$ is of type $(1,1)$, relations \eqref{tipo
1-1} hold and therefore (i) follows. Conversely, given vector
fields $X_1, \dots , X_n , JX_1, \dots , JX_n$ as in (i), let
$\nabla$ be the affine connection such that the space $\mathcal
P^\nabla$ of parallel vector fields is spanned by these vector
fields. It follows that $\nabla$ has trivial holonomy and Lemma
\ref{equiv} implies that $\nabla J=0$. Moreover, using equations
\eqref{flat-torsion} and \eqref{tipo 1-1}, we have that $T$ is of
type $(1,1)$ with respect to $J$, and this proves (iv).
\end{proof}

\medskip

The following definition is motivated by Proposition~\ref{def-ab}
(ii).

\begin{defn}
An affine connection $\nabla$ on a connected almost complex
manifold $(M,J)$ will be called an {\em abelian} connection if it
satisfies condition (iv) of Proposition~\ref{def-ab}.
\end{defn}

\medskip

The next corollary is a straightforward  consequence of Lemma
\ref{equiv} and Proposition~\ref{def-ab}.

\smallskip

\begin{cor}\label{cor-abel}
Let $(M,J)$ be a connected  complex manifold and $\nabla$ an
affine connection with trivial holonomy. Then $\nabla$ is an
abelian connection on $(M,J)$ if and only if the space $\mathcal
P^{\nabla}$ of parallel vector fields is $J$-stable and $J$
satisfies
\begin{equation}\label{eq-abel}
[JX , JY]=[X,Y] \quad \text{ for any  } \quad X, Y \in \mathcal P
^{\nabla}.
\end{equation}
\end{cor}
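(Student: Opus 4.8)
The plan is to unwind the definition of an abelian connection and reduce everything to Lemma~\ref{equiv} and Proposition~\ref{def-ab}. By definition $\nabla$ is abelian precisely when $\nabla J=0$ and its torsion $T$ is of type $(1,1)$, the trivial holonomy already being part of the hypothesis. The first condition, $\nabla J=0$, is handled directly by Lemma~\ref{equiv}: it is equivalent to the $J$-stability of $\mathcal P^{\nabla}$. Thus the entire content of the corollary is to show that, assuming $\mathcal P^{\nabla}$ is $J$-stable, the torsion $T$ is of type $(1,1)$ if and only if \eqref{eq-abel} holds.

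For that equivalence I would argue as follows. Since $\mathcal P^{\nabla}$ is $J$-stable, Lemma~\ref{equiv}(iii) furnishes parallel vector fields $X_1,\dots,X_n,JX_1,\dots,JX_n$ that are linearly independent at every point, hence form a frame and an $\RR$-basis of $\mathcal P^{\nabla}$. Because $T$ and $J$ are tensors, the identity $T(JU,JV)=T(U,V)$ defining type $(1,1)$ need only be checked when $U,V$ run over this frame. For such parallel $U,V$, formula \eqref{flat-torsion} gives $T(U,V)=-[U,V]$; and since $J$-stability makes $JU,JV$ parallel as well, it also gives $T(JU,JV)=-[JU,JV]$. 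Therefore $T(JU,JV)=T(U,V)$ reduces exactly to $[JU,JV]=[U,V]$, which is the assertion of \eqref{eq-abel}. In the language of Proposition~\ref{def-ab}, this says precisely that \eqref{eq-abel} is equivalent to condition~(i) of that proposition, so the corollary follows from the chain of equivalences (i)$\Leftrightarrow$(iv) established there.

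The only point demanding a little care is the passage between a relation holding ``on the frame'' and the same relation holding in full generality. In one direction I use that $T(JU,JV)-T(U,V)$ is tensorial and hence $C^\infty$-bilinear in $U,V$, so that its vanishing on a pointwise frame forces vanishing everywhere --- this is exactly where linear independence at every point is needed. In the converse direction I use that the bracket is $\RR$-bilinear and $J$ is $\RR$-linear, so that the frame relations extend by linearity to all constant-coefficient combinations, that is, to all of $\mathcal P^{\nabla}$. Neither step is a genuine obstacle; the corollary is essentially a repackaging of Proposition~\ref{def-ab} and Lemma~\ref{equiv}, and I would expect the written proof to be as short as that of Corollary~\ref{cor-chern}, merely observing that \eqref{eq-abel} matches condition~(i) of Proposition~\ref{def-ab}.
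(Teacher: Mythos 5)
Your proof is correct and follows the same route as the paper: the published proof simply states that \eqref{eq-abel} is equivalent to condition (i) of Proposition \ref{def-ab}, with the $J$-stability part handled by Lemma \ref{equiv}, exactly as you anticipate. Your additional remarks on tensoriality and checking the identity on a pointwise frame just make explicit the details the paper leaves to the reader.
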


\begin{proof}
We just have to observe that \eqref{eq-abel} is equivalent to
condition (i) of Proposition \ref{def-ab}.
\end{proof}

\

\section{Complete complex connections with parallel torsion and trivial holonomy}

We begin this section by exhibiting a large class of complex
manifolds equipped with complex connections with trivial holonomy
whose torsion tensors are of type $(2,0)$ or $(1,1)$ (compare with
(iv) in Propositions \ref{complex-parallelizable} and
\ref{def-ab}).

\medskip

We begin by recalling known facts on invariant complex structures
and affine connections on Lie groups.

A complex structure on a real Lie algebra $\fg$ is an endomorphism
$J$ of $\fg$ satisfying $J^2=-I$ and such that $N(x,y)=0$ for all
$x,y\in\fg$, where $N$ is defined as in \eqref{nijen}. It is well
known that \eqref{nijen} holds if and only if ${\fg}^{1,0}$, the
$i$-eigenspace of $J$, is a complex subalgebra of $\fg ^{\CC} :=
{\fg}\otimes_\RR \CC$.

When ${\fg}^{1,0}$ is a complex ideal we say that $J$ is
bi-invariant and when ${\fg}^{1,0}$ is abelian we say that $J$ is
abelian. In terms of the bracket on $\fg$, these conditions can be
expressed as follows: \begin{equation} J \text{ is bi-invariant if
and only if} \quad J[x,y]=[x,Jy],\end{equation} and
\begin{equation} J \text{ is abelian if and only if} \quad
[Jx,Jy]=[x,y],\end{equation} for all $x,y \in \fg$. We note that a
complex structure on a Lie algebra cannot be both abelian and
bi-invariant, unless the Lie bracket is trivial.

If $G$ is a Lie group with Lie algebra $\fg$, by left translating
the endomorphism $J$ we obtain a  complex manifold $(G,J)$ such
that left translations are holomorphic maps. A complex structure
of this kind is called left invariant. If $\Gamma \subset G$ is
any discrete subgroup of $G$ with  projection $\pi:G\to\Gamma
\backslash G$ then the induced complex structure on $\Gamma
\backslash G$ makes $\pi$ holomorphic. It will be  denoted $J_0$.

\medskip

Let $G$ be a Lie group with Lie algebra $\fg$ and suppose that $G$
admits a left invariant affine connection $\nabla$, i.e., each
left translation is an affine transformation of $G$. In this case,
if $X,Y$ are two left invariant vector fields on $G$ then
$\nabla_XY$ is also left invariant. Moreover, there is a one-one
correspondence between the set of left invariant connections on
$G$ and the set of $\fg$-valued bilinear forms $\fg \times \fg \to
\fg$ (see \cite[p.102]{He}). It is known that the completeness of
a left invariant affine connection $\nabla$ on $G$ can be studied
by considering the corresponding connection on the Lie algebra
$\fg$. Indeed, the left invariant connection $\nabla$ on $G$ will
be complete if and only if the differential equation on $\fg$ \[
\dot{x}(t) = -\nabla_{x(t)}x(t)\] admits solutions $x(t)\in \fg$
defined for all $t\in \RR$ (see for instance \cite{BM} or
\cite{Gu}).

\smallskip

The left invariant affine connection $\nabla$ on $G$ defined by
$\nabla_XY=0$ for all $X,Y$ left invariant vector fields on $G$ is
known as the $(-)$-connection. This connection satisfies:
\begin{enumerate}
\item Its torsion $T$ is given by $T(X,Y)=-[X,Y]$ for all $X,Y$ left invariant vector fields on $G$;
\item $\nabla T=0$ and $\mathcal P^{\nabla}=\fg \subset
\mathfrak{X}(G)$;
\item The holonomy group of $\nabla$ is trivial, thus, $\nabla$ is flat;
\item The geodesics of $\nabla$ through the identity $e\in G$ are
Lie group homomorphisms $\RR\to G$, therefore, $\nabla$ is
complete;
\item The parallel transport along any curve joining $g\in G$ with
$h\in G$ is given by the derivative of the left translation $(d
L_{hg^{-1}})_g$.
\end{enumerate}

If $\Gamma \subset G$ is any discrete subgroup of $G$ then the
$(-)$-connection on $G$ induces a unique connection on $\Gamma
\backslash G$ such that the parallel vector fields are
$\pi$-related with the left invariant vector fields on $G$, where
$\pi:G\to\Gamma \backslash G$ is the projection. This induced
connection on $\Gamma \backslash G$ is  complete, has trivial
holonomy, its torsion is parallel and $\pi$ is affine.  It will be
denoted $\nabla^0$.

If $J$ is a left invariant complex structure on $G$, then $J$ is
parallel with respect to the $(-)$-connection $\nabla$. Moreover,
$J_0$ is parallel with respect to $\nabla^0$, therefore,
$(\Gamma\backslash G,J_0)$ carries a complete complex connection
with trivial holonomy and parallel torsion.

In the next result we prove that the converse also holds.

\begin{thm} \label{discrete}
The triple $(M,J,\nabla)$ where $M$ is a connected manifold
endowed with a complex structure $J$ and a complex connection
$\nabla$ with trivial holonomy is equivalent to a triple
$(\Gamma\backslash G, J_0, \nabla^0)$ as above if and only if
$\nabla$ is complete and its torsion is parallel.
\end{thm}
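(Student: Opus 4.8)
For the final statement (Theorem~\ref{discrete}) I would proceed as follows.

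The plan is to prove the two implications separately, with the ``only if'' direction being essentially a restatement of the facts recorded just before the theorem and the ``if'' direction carrying all the content. For the ``only if'' direction, suppose $(M,J,\nabla)$ is equivalent to $(\Gamma\backslash G,J_0,\nabla^0)$. Since $\nabla^0$ is induced by the $(-)$-connection on $G$, properties (2) and (4) of the $(-)$-connection listed above show that $\nabla^0$ is complete and has parallel torsion; both properties are preserved under the equivalence, so $\nabla$ is complete with parallel torsion as well.

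For the ``if'' direction, assume $\nabla$ is complete with parallel torsion. By Lemma~\ref{p-nabla}, $\fg:=\mathcal P^\nabla$ is a Lie subalgebra of $\mathfrak{X}(M)$ of real dimension $\dim M$ whose elements frame $M$ at every point. Since $\nabla J=0$, Lemma~\ref{equiv} gives that $\fg$ is $J$-stable, so $J$ restricts to an endomorphism of $\fg$ with $J^2=-I$; because $N\equiv 0$ as a tensor on $M$ and $\fg$ frames $M$, the algebraic condition making $J$ a complex structure on the Lie algebra $\fg$ holds. I then let $G$ be the connected, simply connected Lie group with Lie algebra $\fg$, equipped with the left invariant complex structure determined by $J$ and with its $(-)$-connection.

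Next I would pass to the universal covering $\pi\colon\tilde M\to M$ and lift $J,\nabla$ to $\tilde J,\tilde\nabla$, which remain complete, flat and with parallel torsion. A parallel vector field $V$ satisfies $\nabla_V V=0$, so its integral curves are geodesics; completeness of $\tilde\nabla$ therefore forces every field in $\mathcal P^{\tilde\nabla}\cong\fg$ to be complete. By Palais' integrability theorem this finite-dimensional Lie algebra of complete, pointwise independent fields integrates to an action of $G$ on $\tilde M$ that is locally transitive and locally free, hence simply transitive since $\tilde M$ is connected and simply connected. Fixing a base point as the identity yields a diffeomorphism $\tilde M\cong G$ carrying $\tilde\nabla$ to the $(-)$-connection and $\tilde J$ to the left invariant complex structure; this is the analogue of \cite[Theorem 1]{Wa} and \cite[Theorem 3.6]{KT}, and I expect the verification of simple transitivity together with the claim that the identification intertwines all three structures to be the main obstacle.

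Finally, under $\tilde M\cong G$ the parallel fields are exactly the left invariant vector fields, and since the parallel fields on $M$ lift to deck-invariant parallel fields that span $\mathcal P^{\tilde\nabla}$, every deck transformation $\phi$ fixes each left invariant field, i.e. $\phi_*X=X$ for all left invariant $X$. A diffeomorphism commuting with the flows of all left invariant fields commutes with all right translations, so $\phi=L_{\phi(e)}$ is a left translation; thus the deck group is realized as a discrete subgroup $\Gamma\subset G$ acting by left translations, and $M=\tilde M/\pi_1(M)=\Gamma\backslash G$. Because left translations are holomorphic for the left invariant $J$ and affine for the $(-)$-connection, $\tilde J$ and $\tilde\nabla$ descend to $J_0$ and $\nabla^0$, and the identification is $\Gamma$-equivariant, giving the asserted equivalence $(M,J,\nabla)\cong(\Gamma\backslash G,J_0,\nabla^0)$.
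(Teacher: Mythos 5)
Your proof is correct, and the overall architecture (identify $\mathcal P^{\nabla}$ as a $J$-stable Lie algebra via Lemmas \ref{p-nabla} and \ref{equiv}, take $G$ the simply connected group with this Lie algebra, realize $M$ as $\Gamma\backslash G$) matches the paper's. The technical core is executed differently, though. The paper invokes the Cartan--Ambrose--Hicks theorem \cite[Theorem 1.9.1]{W-libro} to produce directly a unique affine covering $f\colon (G,\nabla^-)\to (M,\nabla)$ extending a chosen frame isomorphism $\phi\colon\mathcal P^{\nabla}\to T_mM$ (this is where completeness and parallelism of $T$ enter, since $G$ with the $(-)$-connection is flat with the matching constant torsion), and then cites Hicks' lemmas \cite{H} to identify $\Gamma=f^{-1}(m)$ with the deck group acting by left translations. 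You instead pass to the universal cover $\tilde M$, prove completeness of the parallel fields from geodesic completeness, integrate the Lie algebra $\mathcal P^{\tilde\nabla}$ to a simply transitive $G$-action via Palais' theorem, and then give a self-contained argument that deck transformations fix all parallel fields, hence commute with all right translations, hence are left translations --- which is essentially a proof of the content of \cite[Lemmas 2 and 3]{H} rather than a citation of it. Your route is longer but more elementary in that it replaces the black box of Cartan--Ambrose--Hicks with Palais integration plus an explicit deck-group computation; the paper's route is shorter and makes the role of the hypotheses (completeness and $\nabla T=0$ as the exact input to CAH) more transparent. Minor points to tighten in a final write-up: state the sign/side convention in Palais' theorem (the integrated action corresponding to a Lie algebra homomorphism into $\mathfrak X(\tilde M)$ is the analogue of right translations, which is what makes the deck group act on the left), and note explicitly that a flat connection with trivial holonomy is determined by its space of parallel fields, so that the identification $\tilde M\cong G$ automatically intertwines $\tilde\nabla$ with the $(-)$-connection and $\tilde J$ with the left invariant structure.
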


\begin{proof} The ``only if" part follows from the previous paragraphs.

For the converse, let  $\mathcal P^{\nabla}$ be the space of
parallel vector fields on $M$.
 According to Lemmas \ref{p-nabla} and \ref{equiv},  $\mathcal
P^{\nabla}$ is a $J$-stable Lie algebra. Let $G$ be the simply
connected Lie group with Lie algebra $\mathcal P^{\nabla}$ with
the left invariant complex structure induced by $J$ on $\mathcal
P^{\nabla}$. This complex structure on $G$ will also be denoted by
$J$. If $\nabla^-$ denotes the $(-)$-connection on $G$, we note
that  $\nabla^-$ is induced by the connection $\nabla$ on $M$.
Moreover, $\nabla ^-$ is a complex connection on $(G, J)$.

Let $e \in G$ be the identity and fix $m\in M$. We identify the
tangent space to $G$ at $e$ with $\mathcal P^{\nabla}$. Let $\phi
: \mathcal P^{\nabla} \to T_m M$ be the complex linear isomorphism
defined by $\phi( X )= X_m $. Since the left invariant vector
fields on $G$ correspond to the parallel vector fields on $M$,
 the Cartan-Ambrose-Hicks Theorem (see \cite[Theorem
1.9.1]{W-libro}) applies and there exists a unique affine covering
$f:(G, \nabla^-) \to (M, \nabla)$ such that $(df)_e = \phi$.
Moreover, since both $\nabla ^-$ and $ \nabla$ are complex
connections and $f$ is affine, it follows that $f$ is holomorphic.

Setting $\Gamma = f^{-1}(m)$, it follows from \cite[Lemmas 2 and
3]{H} that $\Gamma$ acting on $G$ by left translations, coincides
with the deck transformation group of the covering. Therefore, $f$
induces a holomorphic affine diffeomorphism between
$(\Gamma\backslash G, J_0, \nabla^0)$ and $(M,J,\nabla)$.

\end{proof}

Let $G$ be a Lie group with Lie algebra $\fg$ equipped with a left
invariant complex structure $J$ and the $(-)$-connection. Since
the torsion $T$ is given by $T(X,Y)=-[X,Y]$ for all left invariant
vector fields $X,Y$ on $G$, it follows that
\begin{equation}\label{2,0}
T \text{ is of type $(2,0)$ with respect to $J$ if and only if $J$
is bi-invariant on } \fg,
\end{equation}
and
\begin{equation}\label{1-1}
T \text{ is of type $(1,1)$ with respect to $J$ if and only if $J$
is abelian on } \fg.
\end{equation}
These properties are shared also by the induced connection
$\nabla^0$ on $\Gamma\backslash G$ with respect to $J_0$.

\medskip

\begin{cor}\label{coro-chern}
Let $(M,J)$ be a complex manifold with a Chern-type connection
$\nabla$. If the torsion tensor field $T$ is parallel, then:
\begin{enumerate}
\item[(i)] the space $\mathcal P^{\nabla}$ of parallel vector fields on $M$ is
a complex Lie algebra and $J$ is a bi-invariant complex structure
on $\mathcal P^{\nabla}$;
\item[(ii)] if, furthermore, $\nabla$ is complete, then
$(M,J,\nabla)$ is equivalent to $(\Gamma\backslash G,
J_0,\nabla^0)$, where $G$ is a simply connected complex Lie group
and $\Gamma \subset G$ is a discrete subgroup.
\end{enumerate}
\end{cor}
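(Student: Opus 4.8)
The plan is to establish (i) by assembling the structural lemmas of Section~\ref{sec-3} and then to feed its conclusion into Theorem~\ref{discrete} to obtain (ii). First I would unwind the hypothesis: by Definition~\ref{def-chern-type} a Chern-type connection satisfies $\nabla J=0$, has trivial holonomy, and has torsion of type $(2,0)$. Since $T$ is assumed parallel, Lemma~\ref{p-nabla} shows that $\mathcal P^{\nabla}$ is a Lie subalgebra of $\mathfrak X(M)$, and since $\nabla J=0$, Lemma~\ref{equiv} shows that $\mathcal P^{\nabla}$ is $J$-stable; hence $J$ restricts to an endomorphism of the real Lie algebra $\mathcal P^{\nabla}$ squaring to $-\id$. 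Corollary~\ref{cor-chern} then furnishes the identity $J[X,Y]=[X,JY]$ for all $X,Y\in\mathcal P^{\nabla}$, which is exactly the condition that $J$ be bi-invariant on $\mathcal P^{\nabla}$.

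The one point that needs a short verification is that bi-invariance upgrades the real Lie algebra $\mathcal P^{\nabla}$ to a \emph{complex} Lie algebra with $i$ acting as $J$. I would check $\CC$-bilinearity of the bracket: the bi-invariance identity gives $[X,JY]=J[X,Y]$, and combining it with the antisymmetry of the bracket (apply it to the pair $Y,X$ and negate) yields $[JX,Y]=J[X,Y]$ as well, so the bracket is complex-linear in both slots. Integrability of $J$ (vanishing of the Nijenhuis tensor) is then automatic, as in the discussion preceding \eqref{2,0}. This establishes (i).

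For (ii) I would invoke Theorem~\ref{discrete}: with $\nabla$ now also complete and $T$ parallel, that theorem identifies $(M,J,\nabla)$ with a triple $(\Gamma\backslash G,J_0,\nabla^0)$, where $G$ is the simply connected Lie group whose Lie algebra is $\mathcal P^{\nabla}$ endowed with the left invariant complex structure induced by $J$, and $\Gamma\subset G$ is a discrete subgroup. It then remains only to promote ``Lie group'' to ``complex Lie group'': by part (i) the structure $J$ is bi-invariant on $\mathcal P^{\nabla}$, so the induced left invariant complex structure on $G$ is bi-invariant, which is precisely the condition guaranteeing that multiplication and inversion are holomorphic. Hence $G$ is a simply connected complex Lie group, completing (ii). I expect the only step beyond routine bookkeeping to be this last equivalence, between bi-invariance of the left invariant structure and $G$ being a complex Lie group; every other step is a direct citation of a result already proved.
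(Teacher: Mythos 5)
Your proposal is correct and follows essentially the same route as the paper: establish that $\mathcal P^{\nabla}$ is a $J$-stable Lie algebra via Lemmas \ref{p-nabla} and \ref{equiv}, translate the type $(2,0)$ torsion condition into bi-invariance of $J$ (the paper cites \eqref{2,0} where you cite Corollary \ref{cor-chern}, but these express the same identity), observe that a bi-invariant complex structure turns the real Lie algebra into a complex one, and then apply Theorem \ref{discrete} for part (ii). The only difference is that you spell out the verification that bi-invariance yields $\CC$-bilinearity of the bracket, which the paper asserts without proof.
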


\begin{proof} The space $\mathcal P^{\nabla}$ is a $J$-stable Lie algebra
since $T$ is parallel and $\nabla J=0$. The first assertion now
follows from \eqref{2,0}, noting that a bi-invariant complex
structure on a real Lie algebra gives rise to a complex Lie
algebra.

(ii) follows from (i) and Theorem \ref{discrete}, since $G$ is the
simply connected Lie group with Lie algebra $\mathcal P^{\nabla}$.
\end{proof}

\begin{cor}
Let $(M,J,g)$ a Hermitian manifold such that the associated Chern
connection $\nabla$ is complete, has trivial holonomy and parallel
torsion. Then $(M,J,g)$ is equivalent to a triple
$(\Gamma\backslash G, J_0, g_0)$, where $G$ is a simply connected
complex Lie group and $g_0$ is induced by a left invariant
Hermitian metric on $G$. Furthermore, the Chern connection on the
quotient coincides with $\nabla^0$.
\end{cor}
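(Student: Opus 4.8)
The plan is to reduce the statement to Corollary~\ref{coro-chern} and then transport the Hermitian metric through the resulting equivalence. First I would note that the Chern connection $\nabla$ is a complex metric connection whose torsion is of type $(2,0)$; since by hypothesis it also has trivial holonomy, it satisfies condition (iv) of Proposition~\ref{complex-parallelizable} and is therefore a Chern-type connection in the sense of Definition~\ref{def-chern-type}. As $T$ is parallel and $\nabla$ is complete, Corollary~\ref{coro-chern} applies and yields: the space $\mathcal P^{\nabla}$ of parallel vector fields is a complex Lie algebra on which $J$ is bi-invariant, and $(M,J,\nabla)$ is equivalent to $(\Gamma\backslash G, J_0, \nabla^0)$, where $G$ is the simply connected complex Lie group with Lie algebra $\mathcal P^{\nabla}$ and $\Gamma\subset G$ is a discrete subgroup. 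In particular the holomorphic affine diffeomorphism $f$ provided by Theorem~\ref{discrete} matches the parallel vector fields on $M$ with the left invariant vector fields on $G$.

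The only structure left to handle is the metric, and here I would use that $\nabla g=0$. For any $X,Y\in\mathcal P^{\nabla}$ one computes $Z\,g(X,Y)=g(\nabla_Z X,Y)+g(X,\nabla_Z Y)=0$, so $g(X,Y)$ is constant on $M$; evaluating at a point thus defines an inner product on $\mathcal P^{\nabla}=\fg$. Because $g$ is Hermitian and $J$ is bi-invariant on $\fg$, this inner product is $J$-invariant, hence determines a left invariant Hermitian metric $g_G$ on $G$. Since $g$ has constant coefficients in a parallel frame and $f$ intertwines parallel with left invariant vector fields, $f$ is a local isometry for $g$ and $g_G$; passing to the quotient, $g_G$ descends to a Hermitian metric $g_0$ on $\Gamma\backslash G$, and $f$ induces a holomorphic isometry identifying $(M,J,g)$ with $(\Gamma\backslash G, J_0, g_0)$.

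Finally I would identify the Chern connection of the quotient. The induced connection $\nabla^0$ satisfies $\nabla^0 J_0=0$, has torsion of type $(2,0)$ by \eqref{2,0} (as $J$ is bi-invariant), and satisfies $\nabla^0 g_0=0$ because $g_0$ has constant coefficients in the left invariant frame, which is parallel for $\nabla^0$. By the uniqueness of the Chern connection among complex metric connections with torsion of type $(2,0)$ (recall \eqref{chern}), $\nabla^0$ is the Chern connection of $(\Gamma\backslash G, J_0, g_0)$. The steps involving the complex structure and the connection are already supplied by Corollary~\ref{coro-chern} and Theorem~\ref{discrete}; the one point that genuinely needs the Hermitian hypothesis is the passage from $\nabla g=0$ to the fact that $g$ has constant coefficients in the parallel frame, which is precisely what lets the metric be read off as a left invariant tensor on $G$, so I expect that to be the crux of the argument.
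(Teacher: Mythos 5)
Your proof is correct and follows essentially the same route as the paper: apply Corollary~\ref{coro-chern} to get the equivalence $(M,J,\nabla)\simeq(\Gamma\backslash G,J_0,\nabla^0)$, transport the metric through the affine covering using $\nabla g=0$ (the paper phrases this as ``replacing the complex structure by a Hermitian metric'' in the argument of Theorem~\ref{discrete}), and identify $\nabla^0$ with the Chern connection of the quotient by the uniqueness in \eqref{chern}. Your write-up merely makes explicit the steps the paper leaves implicit, in particular that $g$ has constant coefficients in the parallel frame.
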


\begin{proof}
According to Corollary \ref{coro-chern}, we have that
$(M,J,\nabla)$ is equivalent to $(\Gamma\backslash G, J_0,
\nabla^0)$, with $J_0$ a complex structure induced by a
bi-invariant complex structure on $G$.

Following the argument in the proof of Theorem \ref{discrete},
replacing the complex structure by a Hermitian metric, we obtain a
left invariant Hermitian metric on $G$ such that the affine
covering $f:G\to M$ becomes a local isometry. Moreover, the Chern
connection of this Hermitian structure on $G$ is the
$(-)$-connection $\nabla^-$. The induced Hermitian structure on
$\Gamma\backslash G$ is equivalent to the given one on $M$.
\end{proof}

\medskip

Given an affine connection $\nabla$ with torsion $T$, we consider
the tensor field $T^{(2)}$ defined by
\[ T^{(2)}(X,Y,Z,W)=T(T(X,Y),T(Z,W)), \]
where $X,Y,Z,W$ are vector fields on $M$ (see \cite[p. 389]{KT}).

\begin{cor} \label{2-step-solv}
Let $\nabla$ be an abelian connection on a connected complex
manifold $(M,J)$ such that the torsion tensor field $T$ is
parallel. Then:
\begin{enumerate}
\item[(i)] the space $\mathcal P ^{\nabla}$ of parallel
vector fields on $M$ is a Lie algebra and $J$ is an abelian
complex structure on $\mathcal P ^{\nabla}$;
\item[(ii)] the Lie algebra $\mathcal P ^{\nabla}$ is $2$-step solvable, that is,
$T^{(2)}\equiv 0$;
\item[(iii)] if, furthermore, $\nabla$ is complete, then $(M,J,\nabla)$ is equivalent to $(\Gamma\backslash G,J_0,\nabla^0)$, where $G$ is a
simply connected $2$-step solvable Lie group equipped with a left
invariant abelian complex structure and $\Gamma \subset G$ a
discrete subgroup.
\end{enumerate}
\end{cor}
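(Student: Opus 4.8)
The plan is to treat the three assertions in turn: the first and third are short deductions from results already established, while the second contains the only substantial work.

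For (i), recall that an abelian connection is by definition a complex connection with trivial holonomy whose torsion is of type $(1,1)$, so in particular $\nabla J=0$. Since $T$ is assumed parallel, Lemma~\ref{p-nabla} shows that $\mathcal P^{\nabla}$ is a Lie subalgebra of $\mathfrak X(M)$. By Corollary~\ref{cor-abel} the space $\mathcal P^{\nabla}$ is $J$-stable and $J$ satisfies $[JX,JY]=[X,Y]$ for all $X,Y\in\mathcal P^{\nabla}$; together with $J^2=-\id$ and the integrability relation $[JX,Y]=-[X,JY]$ (which gives $N\equiv0$ on $\mathcal P^{\nabla}$), this says precisely that $J$ restricts to an abelian complex structure on the Lie algebra $\fg:=\mathcal P^{\nabla}$, proving (i).

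For (ii), I would first rewrite $T^{(2)}$ in terms of the bracket. Evaluating the tensor field $T^{(2)}$ on the parallel frame $X_1,\dots,X_n,JX_1,\dots,JX_n$ of Lemma~\ref{equiv} (which trivialises $TM$, so a tensor field vanishes as soon as it vanishes on these fields), and using that $T(X,Y)=-[X,Y]$ by \eqref{flat-torsion} is again parallel by (i), we obtain
\[ T^{(2)}(X,Y,Z,W)=T\bigl(T(X,Y),T(Z,W)\bigr)=-\bigl[[X,Y],[Z,W]\bigr] \]
for all $X,Y,Z,W\in\mathcal P^{\nabla}$. Thus $T^{(2)}\equiv0$ is equivalent to the vanishing of the second derived algebra of $\fg$, i.e.\ to $\fg$ being $2$-step solvable, and (ii) reduces to the purely algebraic statement that a Lie algebra carrying an abelian complex structure is metabelian. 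To prove this I would complexify: writing $\fg^{\CC}=\fg^{1,0}\oplus\fg^{0,1}$, the condition that $J$ be abelian is exactly that $\fg^{1,0}$, and hence its conjugate $\fg^{0,1}$, be an abelian subalgebra, so $\fg^{\CC}$ is the direct sum of two abelian subalgebras. One then concludes $[\,[\fg^{\CC},\fg^{\CC}],[\fg^{\CC},\fg^{\CC}]\,]=0$ and descends to the real form $\fg$. This last step is the main obstacle: it is the known fact (Petravchuk's theorem) that a Lie algebra which is the vector-space sum of two abelian subalgebras is $2$-step solvable. If a self-contained argument is preferred, it can be obtained by a Jacobi-identity computation, using that $[\fg^{1,0},\fg^{1,0}]=[\fg^{0,1},\fg^{0,1}]=0$ forces $[\fg^{\CC},\fg^{\CC}]=[\fg^{1,0},\fg^{0,1}]$ and then expanding a bracket of two such commutators and repeatedly applying the integrability and abelian relations.

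Finally, for (iii) I would invoke Theorem~\ref{discrete}: since $\nabla$ is complete and $T$ is parallel, $(M,J,\nabla)$ is equivalent to $(\Gamma\backslash G,J_0,\nabla^0)$, where $G$ is the simply connected Lie group with Lie algebra $\mathcal P^{\nabla}$ carrying the left invariant complex structure induced by $J$, and $\Gamma\subset G$ is discrete. By part (i) this left invariant complex structure is abelian (equivalently, by \eqref{1-1}, the torsion being of type $(1,1)$ corresponds to $J$ abelian on $\fg$), and by part (ii) the Lie algebra $\mathcal P^{\nabla}$ is $2$-step solvable, so $G$ is a simply connected $2$-step solvable Lie group equipped with a left invariant abelian complex structure, as required.
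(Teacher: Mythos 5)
Your proposal is correct and follows essentially the same route as the paper: Lemma \ref{p-nabla} and Corollary \ref{cor-abel} for (i), the reduction of $T^{(2)}\equiv 0$ to $2$-step solvability via \eqref{flat-torsion} on the parallel frame together with the Petravchuk-type result (which the paper likewise cites rather than proves) for (ii), and Theorem \ref{discrete} for (iii). The only difference is that you spell out the complexification argument behind the citation, which the paper leaves to the references.
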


\begin{proof}
It follows from Lemmas \ref{p-nabla} and \ref{equiv} that
$\mathcal P ^{\nabla}$ is a $J$-stable Lie algebra. Corollary
\ref{cor-abel} implies that $J$ is an abelian complex structure on
$\mathcal P ^{\nabla}$. Therefore, $\mathcal P ^{\nabla}$ is
$2$-step solvable (see \cite{P,ABDO}). Moreover, it is
straightforward to verify that the $2$-step solvability of
$\mathcal P ^{\nabla}$ is equivalent to $T^{(2)}\equiv 0$. This
proves (i) and (ii).

(iii) follows from (i), (ii) and Theorem \ref{discrete}, since $G$
is the simply connected Lie group with Lie algebra $\mathcal
P^{\nabla}$.
\end{proof}

\medskip

\begin{rem}
We note that a complete classification of the Lie algebras
admitting abelian complex structures is known up to dimension $6$
(see \cite{ABD}), and there are structure results for arbitrary
dimensions (\cite{BD2}).
\end{rem}

\medskip

\subsection{Examples}

We show next a compact complex manifold $M$ which is not complex
parallelizable but admits abelian connections.

\begin{ex}
Let $N$ be the Heisenberg Lie group, given by
\[  N=\left \{ \begin{pmatrix} 1 & x & z  \\  & 1 & y \\ & & 1 \\
     \end{pmatrix} :\, x,y,z  \in \mathbb R \right\}. \]
The subgroup $\Gamma$ of matrices in $N$ with integer entries is
discrete and cocompact. The $4$-dimensional compact manifold
$M=\left(\Gamma\backslash N\right)\times S^1=(\Gamma\times \mathbb
Z)\backslash (N\times \mathbb R)$ is known as the Kodaira-Thurston
manifold. The Lie group $N\times \mathbb R$ admits a left
invariant abelian complex structure (see for instance \cite{BD,
ABD}), and therefore $M$ inherits a complex structure $J$
admitting an abelian connection. On the other hand, $M$ is not
complex parallelizable. Indeed, if it were, it would follow by
Lemma \ref{wang} that $M$ is a quotient $\Lambda\backslash G$,
where $G$ is a $2$-dimensional complex Lie group and $\Lambda$ is
a discrete cocompact subgroup of $G$. There are only two
$2$-dimensional simply connected complex Lie groups, namely
$\mathbb C^2$ and $\widetilde{\operatorname{Aff}}(\mathbb C)$,
where $\widetilde{\operatorname{Aff}}(\mathbb C)$ is the universal
cover of the group
\begin{equation}\label{aff}
\operatorname{Aff}(\mathbb C)= \left \{
\begin{pmatrix} z & w  \\ 0 & 1 \end{pmatrix} :\, z\in\mathbb
C^{*},\,w\in{\mathbb C} \right\}. \end{equation} The group
$\widetilde{\operatorname{Aff}}(\mathbb C)$ does not admit any
discrete cocompact subgroup, since it is not unimodular
(\cite{Mi}). Thus, $G=\mathbb C^2$ and $M$ is biholomorphic to a
complex torus. Hence, $M$ would admit a K\"ahler structure, which
is impossible since the Kodaira-Thurston manifold does not admit
any such structure (see \cite{Th}).
\end{ex}

\medskip

\begin{ex}
Consider the complex Lie group $\operatorname{Aff}(\mathbb C)$
given in \eqref{aff}, and the discrete subgroup
\[ \Gamma=\left \{
\begin{pmatrix} 1 & w  \\ 0 & 1 \end{pmatrix} :\, w\in{\mathbb Z}[i]
\right\}.\] The quotient $M:=\Gamma\backslash
\operatorname{Aff}(\CC)$ is topologically $\CC^{*}\times
\mathbb{T}^2$. Considering any left invariant Hermitian metric on
$\operatorname{Aff}(\CC)$, we obtain a Hermitian metric on the
quotient whose associated Chern connection is induced by the
$(-)$-connection on the group, hence, it has trivial holonomy.
\end{ex}

\medskip

The next example shows that a complex manifold can admit an
abelian connection with non-parallel torsion.

\begin{ex}
Let $M=\RR^4$ with canonical coordinates $(x_1,x_2,x_3,x_4)$ and
corresponding vector fields $\partial _1, \dots , \partial _4$.
Let $f,g \in C^{\infty}(\RR^4)$, such that $\partial
_k(f)=\partial _k(g)=0$, $k=1,2$, and $\partial _3(f)$ or
$\partial _3(g)$ is not constant. We define an affine connection
$\nabla$  so that the space $\mathcal P^{\nabla}$ of parallel
vector fields is
\[\mathcal P ^{\nabla}=\text{span} _{\, \RR} \{
\partial _1, \partial _2, \partial _3,
 -f \partial _1 +g \partial _2 + \partial _4 \}. \]
We define an almost complex structure on $\RR^4$  as  follows:
\begin{eqnarray*}
J\partial _1 &=&\quad\! \partial _2 , \qquad \quad \hspace{.31cm}
J\partial _3 = -f \partial _1 +g \partial _2+ \partial _4, \\
J\partial _2 &=&-\partial _1 , \qquad \quad\quad \!  J\partial _4
= g \partial _1 +f \partial _2 -\partial _3 .
\end{eqnarray*}
It turns out that $\nabla$ is an abelian connection on $(\RR^4 ,
J)$. In fact, we can check that condition (i) of
Proposition~\ref{def-ab} is satisfied. To do this, we compute
\begin{eqnarray*}
{[J\partial _1, J\partial _3]} &=& {[\partial _2 , -f \partial _1
+g \partial _2+ \partial _4]}= -\partial _2(f) \partial _1
+\partial _2(g) \partial _2=0,
\\
{[\partial _1, J\partial _3]} &=& {[\partial _1 , -f \partial _1
+g \partial _2+ \partial _4]}= -\partial _1(f) \partial _1
+\partial _1(g) \partial _2=0,
\end{eqnarray*}
and the assertion follows. The complex affine manifold
$(\RR^4,J,\nabla)$ cannot be obtained as in Corollary
\ref{2-step-solv}, since
\[ [\partial _3 , J\partial _3]=-\partial _3 (f) \partial _1 + \partial _3 (g)
\partial _2,\]
therefore $\mathcal P^{\nabla}$ is not a Lie algebra, that is, the
torsion tensor field $T$ of $\nabla$ is not parallel, since $f$
and $g$ have been chosen so that $\partial _3(f)$ or
$\partial_3(g)$ is not constant.
\end{ex}

\

\end{document}